\documentclass[11pt,reqno,oneside]{amsproc}
\title[Lower bounds on blowup rate]{
On the blowup rate of vorticity for the Euler equations in a bounded domain}

\author[B.~Ingimarson]{Benjamin Ingimarson}
\address{Department of Mathematics, University of Southern California, Los Angeles, CA 90089}
\email{ingimars@usc.edu}

\author[I.~Kukavica]{Igor Kukavica}
\address{Department of Mathematics, University of Southern California, Los Angeles, CA 90089}
\email{kukavica@usc.edu}

  \chardef\forshowkeys=0
  \chardef\showllabel=0
  \chardef\refcheck=0
  \chardef\sketches=0
  \chardef\figures=1

\usepackage{enumitem,mathtools}
\usepackage{datetime}
\usepackage{fancyhdr}
\usepackage{comment}
\usepackage{listings}

\allowdisplaybreaks
\ifnum\forshowkeys=1

  \usepackage[notref,notcite,color]{showkeys}
\fi
\usepackage[margin=1in]{geometry}
\usepackage{amsmath, amsthm, amssymb}
\usepackage{times}
\usepackage{graphicx}
\usepackage[usenames,dvipsnames,svgnames,table]{xcolor}
\usepackage{marginnote}
\usepackage[unicode,breaklinks=true,colorlinks=true,linkcolor=blue,urlcolor=blue,citecolor=blue]{hyperref}
\usepackage[most]{tcolorbox}
\usepackage{tikz}

\ifnum\refcheck=1
  \usepackage{refcheck}
\fi

\usepackage{import}
\usepackage{xifthen}
\usepackage{pdfpages}
\usepackage{transparent}

\begin{document}
\def\YY{X}
\def\OO{\mathcal O}
\def\SS{\mathbb S}
\def\CC{\mathbb C}
\def\RR{\mathbb R}
\def\TT{\mathbb T}
\def\ZZ{\mathbb Z}
\def\HH{\mathbb H}
\def\RSZ{\mathcal R}
\def\LL{\mathcal L}
\def\SL{\LL^1}
\def\ZL{\LL^\infty}
\def\GG{\mathcal G}
\def\tt{\langle t\rangle}
\def\erf{\mathrm{Erf}}
\def\mgt#1{\textcolor{magenta}{#1}}
\def\ff{\rho}
\def\gg{G}
\def\sqrtnu{\sqrt{\nu}}
\def\ww{w}
\def\ft#1{#1_\xi}
\def\les{\lesssim}
\def\lec{\lesssim}
\def\ges{\gtrsim}
\def\gec{\gtrsim}
\renewcommand*{\Re}{\ensuremath{\mathrm{{\mathbb R}e\,}}}
\renewcommand*{\Im}{\ensuremath{\mathrm{{\mathbb I}m\,}}}
\ifnum\showllabel=1
 \def\llabel#1{\marginnote{\color{lightgray}\rm\small(#1)}[-0.0cm]\notag}
\else
 \def\llabel#1{\notag}
\fi
\newcommand{\norm}[1]{\left\|#1\right\|}
\newcommand{\nnorm}[1]{\lVert #1\rVert}
\newcommand{\abs}[1]{\left|#1\right|}
\newcommand{\NORM}[1]{|\!|\!| #1|\!|\!|}

\newtheorem{Theorem}{Theorem}[section]
\newtheorem{Corollary}[Theorem]{Corollary}
\newtheorem{Proposition}[Theorem]{Proposition}
\newtheorem{Lemma}[Theorem]{Lemma}
\newtheorem{Remark}[Theorem]{Remark}
\newtheorem{definition}{Definition}[section]

\def\theequation{\thesection.\arabic{equation}}
\numberwithin{equation}{section}
\definecolor{mygray}{rgb}{.6,.6,.6}
\definecolor{myblue}{rgb}{9, 0, 1}
\definecolor{colorforkeys}{rgb}{1.0,0.0,0.0}
\newlength\mytemplen
\newsavebox\mytempbox
\makeatletter
\newcommand\mybluebox{%
    \@ifnextchar[
       {\@mybluebox}%
       {\@mybluebox[0pt]}}
\def\@mybluebox[#1]{%
    \@ifnextchar[
       {\@@mybluebox[#1]}%
       {\@@mybluebox[#1][0pt]}}
\def\@@mybluebox[#1][#2]#3{
    \sbox\mytempbox{#3}%
    \mytemplen\ht\mytempbox
    \advance\mytemplen #1\relax
    \ht\mytempbox\mytemplen
    \mytemplen\dp\mytempbox
    \advance\mytemplen #2\relax
    \dp\mytempbox\mytemplen
    \colorbox{myblue}{\hspace{1em}\usebox{\mytempbox}\hspace{1em}}}
\makeatother

\def\pv{\text{p.v.}}
\def\bnew{\color{red}}
\def\enew{\color{black}}
\def\bold{\color{blue}}
\def\eold{\color{black}}
\def\rr{r}
\def\weaks{\text{\,\,\,\,\,\,weakly-* in }}
\def\inn{\text{\,\,\,\,\,\,in }}
\def\cof{\mathop{\rm cof\,}\nolimits}
\def\Dn{\frac{\partial}{\partial N}}
\def\Dnn#1{\frac{\partial #1}{\partial N}}
\def\tdb{\tilde{b}}
\def\tda{b}
\def\qqq{u}
\def\lat{\Delta_2}
\def\biglinem{\vskip0.5truecm\par==========================\par\vskip0.5truecm}
\def\inon#1{\hbox{\ \ \ \ \ \ \ }\hbox{#1}}                
\def\onon#1{\inon{on~$#1$}}
\def\inin#1{\inon{in~$#1$}}
\def\FF{F}
\def\andand{\text{\indeq and\indeq}}
\def\ww{w(y)}
\def\ll{{\color{red}\ell}}
\def\ee{\mathrm{e}}
\def\startnewsection#1#2{ \section{#1}\label{#2}\setcounter{equation}{0}}   
\def\nnewpage{ }
\def\sgn{\mathop{\rm sgn\,}\nolimits}    
\def\Tr{\mathop{\rm Tr}\nolimits}    
\def\div{\mathop{\rm div}\nolimits}
\def\curl{\mathop{\rm curl}\nolimits}
\def\dist{\mathop{\rm dist}\nolimits}
\def\id{\mathop{\rm id}\nolimits}
\def\supp{\mathop{\rm supp}\nolimits}
\def\indeq{\quad{}}           
\def\period{.}                       
\def\semicolon{\,;}                  
\def\cmi#1{\text{~{{\coli IK: \underline{#1}}}~}}
\def\coli{\color{colorigor}}
\definecolor{colorigor}{rgb}{.5, 0.2, 0.8}
\def\colr{\color{red}}
\def\colrr{\color{black}}
\def\colb{\color{black}}
\def\coly{\color{lightgray}}
\definecolor{colorgggg}{rgb}{0.1,0.5,0.3}
\definecolor{colorllll}{rgb}{0.0,0.7,0.0}
\definecolor{colorhhhh}{rgb}{0.3,0.75,0.4}
\definecolor{colorpppp}{rgb}{0.7,0.0,0.2}
\definecolor{coloroooo}{rgb}{0.45,0.0,0.0}
\definecolor{colorqqqq}{rgb}{0.1,0.7,0}
\def\colg{\color{colorgggg}}
\def\collg{\color{colorllll}}
\def\cole{\color{coloroooo}}
\def\coleo{\color{colorpppp}}
\def\colu{\color{blue}}
\def\colc{\color{colorhhhh}}
\def\colW{\colb}   
\definecolor{coloraaaa}{rgb}{0.6,0.6,0.6}
\def\colw{\color{coloraaaa}}
\def\comma{ {\rm ,\qquad{}} }            
\def\commaone{ {\rm ,\quad{}} }          
\def\les{\lesssim}
\def\nts#1{{\color{red}\hbox{\bf ~#1~}}} 
\def\ntsf#1{\footnote{\color{colorgggg}\hbox{#1}}} 
\def\blackdot{{\color{red}{\hskip-.0truecm\rule[-1mm]{4mm}{4mm}\hskip.2truecm}}\hskip-.3truecm}
\def\bluedot{{\color{blue}{\hskip-.0truecm\rule[-1mm]{4mm}{4mm}\hskip.2truecm}}\hskip-.3truecm}
\def\purpledot{{\color{colorpppp}{\hskip-.0truecm\rule[-1mm]{4mm}{4mm}\hskip.2truecm}}\hskip-.3truecm}
\def\greendot{{\color{colorgggg}{\hskip-.0truecm\rule[-1mm]{4mm}{4mm}\hskip.2truecm}}\hskip-.3truecm}
\def\cyandot{{\color{cyan}{\hskip-.0truecm\rule[-1mm]{4mm}{4mm}\hskip.2truecm}}\hskip-.3truecm}
\def\reddot{{\color{red}{\hskip-.0truecm\rule[-1mm]{4mm}{4mm}\hskip.2truecm}}\hskip-.3truecm}
\def\tdot{{\color{green}{\hskip-.0truecm\rule[-.5mm]{3mm}{3mm}\hskip.2truecm}}\hskip-.1truecm}
\def\gdot{\greendot}
\def\bdot{\bluedot}
\def\ydot{\cyandot}
\def\rdot{\cyandot}
\def\fractext#1#2{{#1}/{#2}}
\def\ii{\hat\imath}
\def\fei#1{\textcolor{blue}{#1}}
\def\vlad#1{\textcolor{cyan}{#1}}
\def\igor#1{\text{{\textcolor{colorqqqq}{#1}}}}
\def\igorf#1{\footnote{\text{{\textcolor{colorqqqq}{#1}}}}}
\def\Omf{\Omega_{\text f}}
\def\Ome{\Omega_{\text e}}
\def\Omb{\Omega_{\text b}}
\def\Gaf{\Gamma_{\text f}}
\def\Gae{\Gamma_{\text e}}
\def\Gab{\Gamma_{\text b}}
\def\Gac{\Gamma_{\text c}}
\def\Nf{N^{\text f}}
\def\Ne{N^{\text e}}

\newcommand{\p}{\partial}
\renewcommand{\d}{\mathrm{d}}
\newcommand{\UE}{U^{\rm E}}
\newcommand{\PE}{P^{\rm E}}
\newcommand{\KP}{K_{\rm P}}
\newcommand{\uNS}{u^{\rm NS}}
\newcommand{\vNS}{v^{\rm NS}}
\newcommand{\pNS}{p^{\rm NS}}
\newcommand{\omegaNS}{\omega^{\rm NS}}
\newcommand{\uE}{u^{\rm E}}
\newcommand{\vE}{v^{\rm E}}
\newcommand{\pE}{p^{\rm E}}
\newcommand{\omegaE}{\omega^{\rm E}}
\newcommand{\ua}{u_{\rm   a}}
\newcommand{\va}{v_{\rm   a}}
\newcommand{\omegaa}{\omega_{\rm   a}}
\newcommand{\ue}{u_{\rm   e}}
\newcommand{\ve}{v_{\rm   e}}
\newcommand{\omegae}{\omega_{\rm e}}
\newcommand{\omegaeic}{\omega_{{\rm e}0}}
\newcommand{\ueic}{u_{{\rm   e}0}}
\newcommand{\veic}{v_{{\rm   e}0}}
\newcommand{\up}{u^{\rm P}}
\newcommand{\vp}{v^{\rm P}}
\newcommand{\tup}{{\tilde u}^{\rm P}}
\newcommand{\bvp}{{\bar v}^{\rm P}}
\newcommand{\omegap}{\omega^{\rm P}}
\newcommand{\tomegap}{\tilde \omega^{\rm P}}
\newcommand{\eps}{\varepsilon}  
\newcommand{\eqnb}{\begin{equation}}
\newcommand{\eqne}{\end{equation}}
  
\renewcommand{\up}{u^{\rm P}}
\renewcommand{\vp}{v^{\rm P}}
\renewcommand{\omegap}{\Omega^{\rm P}}
\renewcommand{\tomegap}{\omega^{\rm P}}

\begin{abstract}
Given that a solution to the 3D incompressible Euler equations on a \emph{bounded domain} blows up at a time $T_\ast$ and that $T_\ast$ is the first such time, we provide pointwise-in-time lower bounds on $\|D^k\omega\|_{L^\infty(\Omega)}$ for $k \geq 1$. 
We also show that the Gronwall-type inequality satisfied by $\|\omega(t)\|_{L^\infty}$, in the cases that $\Omega = \mathbb{R}^3$, $\mathbb{T}^3$, or a bounded domain, exhibits wildly oscillating solutions. 
\end{abstract}

\maketitle

\setcounter{tocdepth}{2} 
\section{Introduction}\label{sec00}
Consider the 3D incompressible Euler equations,
  \begin{align}
   \begin{split}
    u_t 
    + u\cdot \nabla u
    + \nabla p
    &= 0
    ,
    \\
    \div u 
    &= 0
    ,
   \end{split}
   \label{EQ00}
  \end{align}
in $\Omega \times [0,T]$ with $u \cdot n = 0$ on $\partial \Omega$, where $\Omega$ is a smooth bounded domain.
It is well known that for an initial velocity field $u_0 \in H^r$ with $r > 2.5$, where $\|u_0\|_{H^r} \leq M$ for some $M > 0$, there exists a time $T = T(M) > 0$ such that~\eqref{EQ00} has a solution in the class,
  \begin{equation}
   u
   \in
   C([0,T]; H^r(\Omega)) \cap C^1([0,T];H^{r-1}(\Omega))
   .
   \label{EQ01}
  \end{equation}
It was demonstrated by Ferrari~\cite{F} that if such a solution leaves the class~\eqref{EQ01} at a time $T_\ast$ and that $T_\ast$ is the first such time, then 
  \begin{equation}
   \int_0^{T_\ast} \|\omega(t)\|_{L^\infty(\Omega)} 
   = \infty
   ,
   \quad\quad
   \text{and}
   \quad\quad
   \limsup_{t \to T_\ast} \|\omega(t) \|_{L^\infty(\Omega)} 
   = \infty
   .
   \label{EQ02}
  \end{equation}
The criterion outlined in~\eqref{EQ01} and~\eqref{EQ02} was first shown in the seminal work by Beale, Kato, and Majda~\cite{BKM} when $\Omega = \mathbb{R}^3$ or $\mathbb{T}^3$;
see also \cite{MB,MP} for expositions.
Other than the aforementioned case of a bounded domain, many variants of the criterion have been proven, see~\cite{A,C,CFM,P,KOT,KT}.
For some time-dependent lower bounds on the rate of growth of various norms, see~\cite{C,CIV,CP}, while
for some recent blow-up results of the Euler equations, see~\cite{CMZ,E,JMO,S}.

In a previous work~\cite{IK}, we established lower bounds on the blow-up rate of $\int_0^t \|\omega(s)\|_{L^\infty(\Omega)}$ when $\Omega = \mathbb{R}^3$, $\mathbb{T}^3$, or a smooth bounded domain.
We also provided pointwise-in-time lower bounds on $\|D^k\omega\|_{L^\infty(\Omega)}$ when $\Omega = \mathbb{R}^3$ or~$\mathbb{T}^3$.
The proofs of the latter results depend on the absence of a boundary.  

In this paper, we derive pointwise-in-time lower bounds on $\|D^k\omega\|_{L^\infty(\Omega)}$ when $\Omega$ is a smooth bounded domain.
We also return to the Gronwall-type inequality for $\|\omega\|_{L^\infty(\Omega)}$, showing that it admits solutions which oscillate wildly up to the blow-up time.
In particular, this implies that no nontrivial pointwise-in-time lower bound can be deduced from the inequality alone. 

In Section~\ref{sec01}, we give the main results of the paper. 

In Section~\ref{sec02}, we prove Theorems~\ref{T01} and~\ref{T02}. 
Theorem~\ref{T01} gives pointwise-in-time lower bounds on $\|D\omega\|_{L^\infty}$ with a log-correction. 
In comparison with~\cite{IK}, where we relate $\|D^{1/2}\omega\|_{L^p}$ to $\|D\omega\|_{L^\infty}$ through the Gagliardo-Nirenberg interpolation inequality, the analysis for $\|D\omega\|_{L^\infty}$ is complicated by the presence of a boundary; 
there is no clear evolution estimate which closes for~$\|D^{1/2}\omega\|_{L^p}$. 
The proof instead proceeds by an optimization argument, 
for which the dependence on $p$ for the constants in several well-known inequalities must be tracked.
Lower bounds on the $L^p$ norm are then derived from the vorticity equation which then pass to lower bounds, dependent on $p$, for the $L^\infty $ norm. 
The optimization then follows, treating $p$ as a function of $t$, to give a lower bound at the critical exponent $7/5$ with a log-correction.
We also prove Theorem~\ref{T02}.
The proof is similar to that of~\cite[Theorem~2.6]{IK}, except for the appearance of lower-order terms originating from the presence of a boundary. 

In Section~\ref{sec03}, we revisit the differential inequality satisfied by $\|\omega\|_{L^\infty}$, see~\cite[Theorem~2.1]{IK}. 
Under the usual BKM hypotheses for $r =3$, we showed that
  \begin{equation}
   \dot{x} 
   \leq
   Cx 
   + Cx^2 \exp\left(C \int_0^t x \right)
   ,
   \label{EQ02a}
  \end{equation}
where $x(t) = \|\omega(t)\|_{L^\infty}$.
In fact, we showed in \cite{IK} that
\eqref{EQ02a} implies
  \begin{equation}
   \limsup_{t\to T_\ast^-} (T_\ast -t) \log \left( \frac{1}{T_\ast -t}\right) \|\omega(t)\|_{L^\infty} \geq \frac{1}{C}
   ,
   \label{EQ02b}
  \end{equation}
where the constant $C$ depends on~$\|u_0\|_{H^3}$.
The inequality \eqref{EQ02b} holds when
$\Omega$ equals
$\mathbb{R}^{3}$,
$\mathbb{T}^{3}$,
or a smooth bounded domain~$\Omega$.
The inequality~\eqref{EQ02a} is thus of considerable interest,
and a natural question is whether or not the lower bound on the supremum in~\eqref{EQ02b} can be improved to a pointwise-in-time lower bound on~$\|\omega(t)\|_{L^\infty}$. 
Usually, under the assumption of finite-time blow-up, Gronwall-type inequalities imply
non-oscillatory behavior near the blow-up time,
i.e.,
that the limit of the quantity is~$\infty$.
For example, the inequalities $\dot{x} \leq x^2$ or, perhaps $\dot{x} \leq x^2 e^x$, both imply non-oscillatory blow-up and, hence, pointwise-in-time lower bounds on the blow-up rate. 
However, as is evident in~\eqref{EQ02a}, this analysis is greatly complicated by the non-local term in the exponential. 
In Theorem~\ref{T03} we show that such pointwise-in-time lower bounds are impossible to deduce from~\eqref{EQ02a} alone.
Indeed, we construct a $C^1$ function of time which satisfies~\eqref{EQ02a} and blows up at $T_\ast$ yet oscillates wildly up to the blow-up time $T_\ast$; in particular, there is a sequence of times $t_n \to T_\ast^-$ such that $x(t_n) \to 0$. 
As was shown in~\cite{IK}, the inequality~\eqref{EQ02a} can be recast as
  \begin{equation}
   \frac{d}{d\tau} X(\tau) 
   \les
   1 + X(\tau)e^\tau
   ,
   \label{EQ02c}
  \end{equation}
where $\tau(t) = \int_0^t x(s) \, ds$ denotes the accumulation of~$x$. 
In the proof of Theorem~\ref{T03}, we reverse this logic by showing any solution to~\eqref{EQ02c} gives a solution to~\eqref{EQ02a}, under an analogous reparameterization. 
The desired oscillating solution is then constructed for the more agreeable inequality~\eqref{EQ02c}.

We also note that, when $\Omega = \mathbb{R}^3$
and in the case when the initial velocity belongs only to
$C^{1,\alpha}$, where $\alpha\in(0,1)$,
the quantity $x(t) = \|\omega(t)\|_{L^\infty} + [\omega(t)]_{C^{0,\alpha}}$ satisfies the bound $x(t) \les \exp \exp \left( \int_0^t x \right)$, from which we can deduce the
lower bound \eqref{EQ02b}
for $\Vert \omega(t)\Vert_{L^{\infty}(\Omega)}$
and a bound consistent with those in \cite{IK}
for
$\Vert\omega(t)\Vert_{C^{0,\alpha}(\Omega)}$.

\section{Preliminaries and main results}\label{sec01}
For the rest of the paper, we  denote $L^p = L^p(\Omega)$, where $\Omega$ is a smooth bounded domain. 
The first theorem gives pointwise-in-time lower bounds on the blow-up rate of~$\|D\omega\|_{L^\infty}$. 
\begin{Theorem}
\label{T01}
Let $u$ be a solution to the Euler equations~\eqref{EQ00} for a smooth bounded domain $\Omega \subset \mathbb{R}^3$ in the regularity class~\eqref{EQ01} with $r>7 /2$.
Suppose there is a time $T_\ast > 0$ such that the solution cannot be continued in this class to $T = T_\ast$ and that $T_\ast$ is the first such time.
Then, there exists a constant $\rho > 0$ such that
  \begin{equation}
   \|D\omega(t)\|_{L^\infty}
   \geq 
   \frac{1}{C(T_\ast -t)^{7/5} \log^{\,\rho} \left( \frac{1}{T_\ast -t}\right)}
   ,
   \quad\quad
   T_\ast -t < \frac{1}{C}
   ,
   \label{EQ03}
  \end{equation}
where $C$ is a constant depending on~$\|u_0\|_{L^2}$.
\end{Theorem}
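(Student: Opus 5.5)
The plan is to derive a Riccati-type differential inequality for a $W^{1,p}$ norm of the vorticity, valid for every large finite $p$. I will track the $p$-dependence of all constants, use the inequality to get a lower bound on $\|D\omega(t)\|_{L^p}$ with a $p$-dependent exponent, and then optimize in $p$ as a function of $T_\ast-t$.

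\textbf{Step 1: the differential inequality.} Fix $p\ge 4$ and set $Y_p(t)=\|\omega(t)\|_{L^p}+\|D\omega(t)\|_{L^p}$. Differentiating the vorticity equation $\omega_t+u\cdot\nabla\omega=\omega\cdot\nabla u$ gives
\[
\partial_t D\omega+u\cdot\nabla D\omega=O(|Du||D\omega|+|D^2u||\omega|).
\]
Since $u\cdot n=0$, the transport term integrates to zero in the $L^p$ energy estimate, so no boundary terms appear. The elliptic (Calder\'on--Zygmund) estimates for the div--curl system with $u\cdot n=0$ on the smooth bounded domain give
\[
\|D^2u\|_{L^p}\le Cp\,\|D\omega\|_{L^p}+Cp\,\|u\|_{L^2}.
\]
I will check that the constant grows at most polynomially in $p$, including the lower-order term coming from the boundary and the harmonic part. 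Together with Sobolev embedding, this yields
\[
\dot Y_p\le C p^{k}\,\|\nabla u\|_{L^\infty}\,Y_p
\]
for some fixed $k$.

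\textbf{Step 2: interpolation against the conserved energy.} Next I bound $\|\nabla u\|_{L^\infty}$ by a Gagliardo--Nirenberg inequality on $\Omega$ with $p$-tracked constant, interpolating between $\|u\|_{L^2}$ (conserved, so bounded by $\|u_0\|_{L^2}$) and $\|D^2u\|_{L^p}$. Scaling fixes the exponent: $\|\nabla u\|_{L^\infty}\lesssim p^{k'}\|u_0\|_{L^2}^{1-\beta}Y_p^{\beta}$ plus lower-order terms, where $-1=\tfrac32(1-\beta)+\beta(\tfrac3p-2)$, that is,
\[
\beta=\beta_p=\frac{5p}{7p-6}\in(5/7,1).
\]
Combining with Step 1, and absorbing lower-order terms for $Y_p$ large, gives $\dot Y_p\le C p^{m}Y_p^{1+\beta_p}$, with $C$ depending on $\|u_0\|_{L^2}$ and the domain.

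\textbf{Step 3: pointwise lower bound via a continuation argument.} Fix $t<T_\ast$ and suppose $Y_p(t)$ is small. Comparing with the ODE $\dot y=Cp^m y^{1+\beta_p}$ started at time $t$ shows that $Y_p$ stays finite on $[t,t+\delta)$ with $\delta\sim(Cp^m\beta_p Y_p(t)^{\beta_p})^{-1}$. On this interval $\|\omega\|_{L^\infty}\lesssim Y_p$ because $p>3$, so $\int^{T}\|\omega\|_{L^\infty}<\infty$. By Ferrari's criterion \eqref{EQ02} the solution would then continue past $T_\ast$ if $\delta>T_\ast-t$. Hence
\[
Y_p(t)\ge\bigl(Cp^m(T_\ast-t)\bigr)^{-1/\beta_p},\qquad \frac{1}{\beta_p}=\frac75-\frac{6}{5p}.
\]
Since $\|\omega\|_{L^p}\lesssim\|\omega\|_{L^\infty}\lesssim\|u_0\|_{L^2}^{2/7}\|D\omega\|_{L^\infty}^{5/7}$ (the same scaling with $p=\infty$) and $\|D\omega\|_{L^p}\le|\Omega|^{1/p}\|D\omega\|_{L^\infty}$, this passes to a lower bound on $\|D\omega(t)\|_{L^\infty}$. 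The $\|\omega\|_{L^p}$ part is dominated once $T_\ast-t$ is small.

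\textbf{Step 4: optimization in $p$.} Choose $p=\log\frac1{T_\ast-t}$. Then the lost factor $(T_\ast-t)^{6/(5p)}=e^{-6/5}$ is harmless. The $p^{m/\beta_p}$ prefactor, together with the other polynomial-in-$p$ constants, becomes $\log^{\rho}\frac1{T_\ast-t}$, which gives \eqref{EQ03}.

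The main obstacle is Steps 1--2: obtaining boundary-domain elliptic and Gagliardo--Nirenberg estimates with \emph{explicit polynomial} dependence on $p$. If these constants grew exponentially in $p$, the choice $p\sim\log\frac1{T_\ast-t}$ would destroy the bound. I would first try to get this by flattening the boundary, using the classical $O(p)$ Calder\'on--Zygmund constants in half-space charts, and controlling commutator and lower-order terms by the conserved $\|u\|_{L^2}$.
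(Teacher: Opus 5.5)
Your proposal is correct and follows essentially the paper's argument. It builds a Riccati inequality $\frac{d}{dt}\|D\omega\|_{L^p}\le Cp^{m}(A_p+\|D\omega\|_{L^p})^{1+\alpha}$ with $\alpha=5/(7-6/p)$ from Gagliardo--Nirenberg and div--curl elliptic estimates whose constants are polynomial in $p$. A blow-up-criterion comparison then gives a lower bound with exponent $1/\alpha=7/5-6/(5p)$, this passes to $L^\infty$ via $|\Omega|<\infty$, and you choose $p\sim\log\frac{1}{T_\ast-t}$.

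Two points need tidying, neither affecting the conclusion:
\begin{itemize}
\item Keep the lower-order terms as an additive polynomial-in-$p$ shift $A_p$, as the paper does with $p^{2(\gamma-1)}$. ``Absorbing them for $Y_p$ large'' does not mesh with starting the comparison from a small $Y_p(t)$.
\item Add a $+\|u_0\|_{L^2}$ term to your bound $\|\omega\|_{L^\infty}\lesssim\|u_0\|_{L^2}^{2/7}\|D\omega\|_{L^\infty}^{5/7}$, since a rigid rotation in a ball violates it otherwise.
\end{itemize}
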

For example, the proof of Theorem~\ref{T01} affirms~\eqref{EQ03} for $\rho = 24/5$.

The second theorem gives pointwise-in-time lower bounds on the blow-up rate of $\|D^k \omega\|_{L^\infty}$ for $k \geq 2$.
\begin{Theorem}
\label{T02}
Assume the hypothesis of Theorem~\ref{T01} for $r > 5/2 + m$ where $m \in \{3,4,\dots\}$.
Then, we have 
  \begin{equation}
   \|D^k\omega(t)\|_{L^\infty}
   \geq
   \frac{1}{C(T_\ast -t)^{2k/5 + 1}}
   ,
   \quad\quad
   T_\ast -t < \frac{1}{C}
   \llabel{EQ04}
  \end{equation}
for $k = 2,\dots, m$, where $C$ depends on~$\|u_0\|_{L^2}$. 
\end{Theorem}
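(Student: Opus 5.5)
We follow \cite[Theorem~2.6]{IK}: a local-existence lower bound for a suitable Sobolev norm, followed by interpolation between $L^2$ and $D^k\omega$ in $L^\infty$. The starting point is the standard a priori estimate for the Euler equations in a smooth bounded domain with $u\cdot n=0$, for integer $s\ge3$:
  \begin{equation*}
   \frac{d}{dt}\|u\|_{H^s}\le C\|u\|_{H^s}^2 .
  \end{equation*}
This uses the Kato--Ponce type commutator estimate, the elliptic (div-curl) estimate $\|u\|_{H^s}\lesssim \|\omega\|_{H^{s-1}}+\|u\|_{L^2}$, and the fact that the pressure and boundary terms are lower order. Integrating from $t$ to $T_\ast$ and using that $\|u\|_{H^s}\to\infty$ at $T_\ast$ (by the first-time assumption, since the Sobolev norm cannot stay bounded), we get
  \begin{equation*}
   \|u(t)\|_{H^s}\ge \frac{1}{C(T_\ast-t)} .
  \end{equation*}
By the div-curl estimate and energy conservation $\|u(t)\|_{L^2}=\|u_0\|_{L^2}$, this becomes $\|D^{s-1}\omega\|_{L^2}+\|u_0\|_{L^2}\gtrsim (T_\ast-t)^{-1}$. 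For $T_\ast-t$ small the constant is absorbed, so $\|\omega\|_{H^{s-1}}\ge 1/(C(T_\ast-t))$. Here we use $s-1=1$ or the appropriate lower order, with the lower-order Sobolev pieces controlled by interpolation against $L^2$.

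Next we interpolate, using only quantities controlled by $\|u_0\|_{L^2}$ and $\|D^k\omega\|_{L^\infty}$. A Gagliardo--Nirenberg inequality on a bounded domain gives
  \begin{equation*}
   \|D^{j}u\|_{L^2}\lesssim \|u\|_{L^2}^{1-\theta}\|D^{k+1}u\|_{L^\infty}^{\theta}+\|u\|_{L^2},\qquad \theta=\frac{j}{k+1+3/2}=\frac{2j}{2k+5}.
  \end{equation*}
The additive $\|u\|_{L^2}$ term is the lower-order contribution from the boundary. We also use $\|D^{k+1}u\|_{L^\infty}\lesssim \|D^k\omega\|_{L^\infty}+\|u\|_{L^2}$ up to log losses. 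To avoid those log losses, we instead interpolate through $\omega$ directly: the $H^{s-1}$ norm of $\omega$ is bounded by $\|\omega\|_{L^2}$ and $\|D^k\omega\|_{L^\infty}$. This requires a lower bound in terms of $\omega$ and not $u$, so we choose the base norm appropriately.

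Concretely, take $s=3$ in the first step, so that the bound $\|u(t)\|_{H^3}\ge 1/(C(T_\ast-t))$ is valid. This is legitimate because $r>5/2+m\ge 11/2>3$. Writing $\|u\|_{H^3}\lesssim\|u\|_{L^2}+\|D^3u\|_{L^2}$ and applying Gagliardo--Nirenberg with $j=3$ and top derivative $D^{k+1}u\in L^\infty$ yields $\theta=6/(2k+5)$. Hence
  \begin{equation*}
   \frac{1}{C(T_\ast-t)}\lesssim \|u_0\|_{L^2}^{1-\theta}\bigl(\|D^{k+1}u\|_{L^\infty}+\|u_0\|_{L^2}\bigr)^{\theta}+\|u_0\|_{L^2},
  \end{equation*}
and therefore $\|D^{k+1}u\|_{L^\infty}\gtrsim (T_\ast-t)^{-(2k+5)/6}$. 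This exponent differs from the stated $2k/5+1=(2k+5)/5$. Matching the stated rate needs the base estimate to decay like $(T_\ast-t)^{-1}$ at regularity with $\theta=5/(2k+5)$. The natural choice is $H^{5/2}$ of $u$, which is critical, or equivalently $\|u\|_{H^s}$ with $s=5/2+\epsilon$ fed through the log-free inequality $\frac{d}{dt}\|u\|_{H^s}\le C\|\nabla u\|_{L^\infty}\|u\|_{H^s}$ combined with the Sobolev embedding $H^s\subset W^{1,\infty}$. The step I expect to be the main obstacle is securing the $(T_\ast-t)^{-1}$ lower bound at this fractional, nearly critical level $s$ in a bounded domain. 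I would first try to get it by citing the blow-up bound established in the earlier sections for $\|D\omega\|_{L^\infty}$ (Theorem~\ref{T01}) and interpolating between $D\omega$ and $D^k\omega$.

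The final step passes from $u$ to $\omega$ using $\|D^{k+1}u\|_{L^\infty}\lesssim\|D^k\omega\|_{L^\infty}+\|u_0\|_{L^2}$, which holds up to Hölder/log losses via Schauder estimates for the div-curl system. The log loss is removed by interpolating once more in Hölder spaces, taking $\epsilon$ small. The remaining lower-order terms are absorbed for $T_\ast-t<1/C$.
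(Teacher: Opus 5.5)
There is a genuine gap. As you partly notice, your scheme does not reach the exponent $2k/5+1$. The repairs you propose (moving to the critical level $s=5/2$, or leaning on Theorem~\ref{T01}) do not address the actual losses.

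\emph{First loss: the ODE.} Bounding $\|\nabla u\|_{L^\infty}$ by $\|u\|_{H^s}$ via Sobolev embedding discards scaling and yields only $(T_\ast-t)^{-1}$ for every $s$. The missing idea is to estimate $\|Du\|_{L^\infty}$ and $\|\omega\|_{L^\infty}$ by Gagliardo--Nirenberg between the conserved $\|u_0\|_{L^2}$ and the evolved norm $y$. This gives $\dot y\lesssim(1+y)^{1+\alpha}$ with $\alpha<1$, hence $y\gtrsim(T_\ast-t)^{-1/\alpha}$. For $y=\|u\|_{H^s}$ one has $\alpha=5/(2s)$ and $y\gtrsim(T_\ast-t)^{-2s/5}$, so no critical space is needed.

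\emph{Second loss: the interpolation, which is fatal for any $L^2$-based norm.} Your inequality $\|D^ju\|_{L^2}\lesssim\|u\|_{L^2}^{1-\theta}\|D^{k+1}u\|_{L^\infty}^{\theta}+\|u\|_{L^2}$ with $\theta=2j/(2k+5)$ is false. Gagliardo--Nirenberg requires $\theta\ge j/(k+1)$. To see the failure, take a sum of $\sim\epsilon^{-3}$ disjoint copies $\epsilon^{a}\Phi((x-x_i)/\epsilon)$ of a compactly supported divergence-free bump. The left side is $\sim\epsilon^{a-j}$ and the right side is $\sim\epsilon^{a-(k+1)\theta}$, while $(k+1)\theta<j$. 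With the admissible $\theta=j/(k+1)$ and the corrected bound $\|u\|_{H^j}\gtrsim(T_\ast-t)^{-2j/5}$, you only get $\|D^{k+1}u\|_{L^\infty}\gtrsim(T_\ast-t)^{-2(k+1)/5}$. This falls short of $(2k+5)/5$.

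\emph{Third problem: the passage to vorticity.} The bound $\|D^{k+1}u\|_{L^\infty}\lesssim\|D^k\omega\|_{L^\infty}+\|u_0\|_{L^2}$ is false, since the div--curl system is not bounded at the $L^\infty$ endpoint. Interpolating in H\"older spaces cannot remove the loss without norms you do not control. The fallback through Theorem~\ref{T01} would in any case import its $\log^{\rho}$ factor into a log-free statement.

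\emph{The paper's route.} The paper avoids all three issues by evolving $y=\|D^{k-1}\omega\|_{L^p}$ with $p=2(k+1)$.
\begin{itemize}
\item Gagliardo--Nirenberg against $\|u\|_{L^2}$ gives $\dot y\lesssim(1+y)^{1+\alpha}$ with $\alpha=5/(2k+3-6/p)$, so $y\gtrsim(T_\ast-t)^{-1/\alpha}$.
\item Lemma~\ref{L05} gives $\|D^{k-1}\omega\|_{L^{2(k+1)}}\lesssim\|u\|_{L^2}^{1/(k+1)}\|D^k\omega\|_{L^\infty}^{k/(k+1)}+\|u\|_{L^2}$. It is admissible because this $p$ places it exactly at the endpoint $\theta=k/(k+1)$, which no $L^2$-based norm can reach.
\item Lemma~\ref{L05} also interpolates $\omega$-norms directly. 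It uses the div--curl bound of Lemma~\ref{L04} only in some $L^{p_1}$ with $p_1<\infty$, so no $L^\infty$ elliptic estimate is needed.
\end{itemize}
Combining, the exponent is $\frac{k+1}{k}\cdot\frac{1}{\alpha}=\frac{2k}{5}+1$.
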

The final theorem is a statement about the Gronwall-type inequality satisfied by $\|\omega(t)\|_{L^\infty}$, given in~\eqref{EQ02a}.
Without loss of generality, we consider $C=1$. 
The inequality then reads
  \begin{equation}
   \dot{x}
   \leq
   x 
   + x^2 \exp\left(\int_0^t x\, ds\right)
   ,
   \quad\quad
   t \in [0,T^\ast)
   .
   \label{EQ05}
  \end{equation}
The final theorem shows that~\eqref{EQ05} admits wildly oscillating $C^1$ solutions which blow up at~$T_\ast$.

\begin{Theorem}
\label{T03}
For any $M > 0$, there exists a time $T_\ast = T_\ast(M)$ and a $C^1$ function $x \colon [0,T_\ast) \to (0,\infty)$ such that the following hold:
  \begin{itemize}
   \item[(i)] $x(0) = M$,
   \item[(ii)] $x$ satisfies~\eqref{EQ05}
   \item[(iii)] $\limsup_{t \to T_\ast^-} x(t) = \infty$, and
   \item[(iv)] there exists a sequence $t_n \to T_\ast^-$ such that $x(t_n) = \frac{M}{n+1}$.
  \end{itemize}
\end{Theorem}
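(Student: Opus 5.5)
The plan is to reverse the reparameterization behind \eqref{EQ02c}: construct a function of the accumulated variable $\tau$ and pull it back to $t$. Given a $C^1$ function $X\colon[0,\infty)\to(0,\infty)$, define
\begin{equation*}
t(\tau)=\int_0^\tau \frac{d\sigma}{X(\sigma)},
\qquad
T_\ast=\int_0^\infty \frac{d\sigma}{X(\sigma)} .
\end{equation*}
If $T_\ast<\infty$, then $t$ is a $C^1$ increasing bijection $[0,\infty)\to[0,T_\ast)$ with $dt/d\tau=1/X>0$. Its inverse $\tau(t)$ is $C^1$, and we set $x(t)=X(\tau(t))$. Then $\dot\tau=x$, so $\tau(t)=\int_0^t x\,ds$, and $\dot x=X'(\tau)\,x$. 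Since $x>0$, inequality \eqref{EQ05} is equivalent to
\begin{equation*}
X'(\tau)\le 1+X(\tau)e^{\tau}, \qquad \tau\ge0 .
\end{equation*}
The theorem therefore reduces to constructing $X$ such that:
\begin{itemize}
\item[(a)] $X(0)=M$;
\item[(b)] $X$ is $C^1$, positive, and satisfies the inequality above;
\item[(c)] $\int_0^\infty d\tau/X<\infty$;
\item[(d)] $X(\tau_n)=M/(n+1)$ along some sequence $\tau_n\to\infty$.
\end{itemize}
Condition (d) gives (iv) with $t_n=t(\tau_n)\to T_\ast^-$. Condition (c) forces $\limsup X=\infty$, since otherwise $1/X$ would be bounded below and not integrable; this gives (iii).

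The key observation is that the inequality restricts only growth, so $X$ may decrease as steeply as we like. We build $X$ in cycles with a rapidly increasing sequence $\tau_n$; for instance $\tau_n\ge n+2\log(n+2)$ will do. Each cycle consists of three phases, glued in a $C^1$ fashion.
\begin{itemize}
\item[(1)] \emph{Rising phase.} Starting from $X(\tau_n)=M/(n+1)$, let $X$ solve $X'=\tfrac12 Xe^{\tau}$ (or a $C^1$ modification of it). This is admissible, and it gives $X'\ge \tfrac12 e^{\tau_n}X$ on the phase. Changing variables,
\begin{equation*}
\int \frac{d\tau}{X}=\int\frac{dX}{X\,X'}\le \frac{2}{e^{\tau_n}}\int_{M/(n+1)}^\infty\frac{dX}{X^2}=\frac{2(n+1)}{Me^{\tau_n}} .
\end{equation*}
This is summable in $n$ by the choice of $\tau_n$. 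We stop the phase once $X$ exceeds some large level $L_n\to\infty$.
\item[(2)] \emph{Plateau phase.} Keep $X\ge L_n$ over a $\tau$-interval of length $\ell_n$ with $\ell_n/L_n$ summable. This lets $\tau$ advance to the next $\tau_{n+1}$ at negligible cost. A constant value is admissible, since $0\le 1+Xe^\tau$.
\item[(3)] \emph{Dip phase.} Descend from $L_n$ to $M/(n+2)$ on a very short interval of length $\delta_n$, with $\delta_n(n+2)/M$ summable. The cost $\int d\tau/X$ over the dip is then at most $\delta_n(n+2)/M$, which is summable. Decreasing phases automatically satisfy $X'\le 0\le 1+Xe^\tau$.
\end{itemize}
Summing the three contributions over all cycles gives (c). The initial segment from $X(0)=M$ is treated as cycle $n=0$.

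The main point requiring care is the rising phase. A naive growth rate such as $X'\approx 1$ starting from a small value costs about $\int dX/X\sim\log(n+1)$ per cycle, which is not summable. One must exploit the factor $Xe^\tau$ with $\tau_n$ already large, so that the growth is exponential with a huge rate.

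The $C^1$ gluing at the junctions is routine. At each junction we interpolate $X'$ continuously. Wherever $X'$ is being raised, we keep it below $1+Xe^\tau$, which is possible because the strict inequality $\tfrac12 Xe^\tau<1+Xe^\tau$ leaves room to spare. Wherever $X'$ is being lowered, no constraint applies. Finally, $T_\ast$ is the value of the integral in (c), which depends only on $M$ once the sequences $\tau_n$, $L_n$, $\ell_n$, $\delta_n$ are fixed.
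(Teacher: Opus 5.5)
Your proposal is correct and follows essentially the same route as the paper. Both use the reparametrization $t(\tau)=\int_0^\tau d\sigma/X$ to reduce \eqref{EQ05} to $X'\le 1+Xe^{\tau}$, and then build $X$ cyclically from fast, cheap growth phases and short dips to the values $M/(n+1)$. The differences are only in the profile. The paper uses $M(e^{\tau/2}-e^{n/2}+\frac1{n+1})$ on $[n,n+1-2^{-n-1})$ followed by a $C^1$ drop, whereas you use the super-exponential rise $X'=\tfrac12 Xe^{\tau}$ plus a plateau. Your observation that finiteness of $\int_0^\infty d\tau/X$ by itself forces $\limsup X=\infty$ is a small simplification.
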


Note that (iv) implies $\liminf_{t\to T_{*}^{-}} x(t)=0$.

\section{Lower bounds on $\|D^k\omega\|_{L^\infty}$ for $k = 1,2,\dots$}\label{sec02}
First, we prove Theorem~\ref{T01}. 
In the proof, it is important to track the dependence on $p$ for various inequalities involving the norm~$\|\cdot \|_{W^{k,p}}$. 
We start with the following lemma which gives the $p$-dependence in a Gagliardo-Nirenberg inequality as well as the div-curl lemma found in~\cite[Lemma~5]{BB}.
For the following lemma and the proof of Theorem~\ref{T01}, we write $O_p(1)$ to mean boundedness as $p \to \infty$.
We also write ``$C$'' or ``$\les$'' to refer to dependence on a constant which is~$O_p(1)$. 
\begin{Lemma}
\label{L04}
Assume the hypothesis of Theorem~\ref{T01}. For $p > 4$, we have 
  \begin{equation}
   \|Du\|_{L^\infty}
   \leq
   C\|u\|_{L^2}^{1-\alpha} \|D^2 u\|_{L^p}^\alpha
   + C \|u\|_{L^2}
   ,
   \quad\quad
   \text{where }
   \alpha = \frac{5}{7 - 6/p}
   .
   \label{EQ07}
  \end{equation}
Moreover, 
  \begin{equation}
   \|D^2 u\|_{L^p}
   \leq 
   Cp^2 \|D\omega\|_{L^p}
   + C p^{2\gamma} \|u\|_{L^2}
   ,
   \quad\quad
   \text{where } 
   \gamma = \frac{7p-3}{2p+3}
   .
   \label{EQ08}
  \end{equation}
\end{Lemma}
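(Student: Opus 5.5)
The plan is to prove the two inequalities separately. Both use standard interpolation and elliptic tools, and the only care needed is tracking the $p$-dependence of the constants.

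\emph{The bound \eqref{EQ07}.} First check the exponent by scaling in $\mathbb R^3$. We interpolate $Du$ in $L^\infty$ between $u\in L^2$ and $D^2u\in L^p$. The scaling relation is $1-0 = (1-\alpha)\bigl(0-\tfrac32\bigr)+\alpha\bigl(2-\tfrac3p\bigr)$, which gives $1=\alpha\bigl(\tfrac72-\tfrac3p\bigr)$, i.e., $\alpha = 5/(7-6/p)$. Since $\alpha<1$ precisely when $p>3$, the hypothesis $p>4$ is comfortably admissible.

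On a bounded domain, I would extend $u$ with a fixed Stein-type (or reflection) extension operator $E$ that is bounded on $L^2$ and on $W^{2,p}$ with norm $O_p(1)$. For a reflection extension of order $2$ this uniformity is clear, because the operator is built from a partition of unity and explicit linear reflections. We then apply the whole-space Gagliardo–Nirenberg inequality to $Eu$ multiplied by a cutoff.

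To keep the constant $O_p(1)$, I would not cite the abstract inequality. Instead I would prove it by hand. Split $Du$ into Littlewood–Paley pieces, or equivalently use the Morrey/Sobolev estimate $\|Df\|_{L^\infty}\lesssim \|D^2 f\|_{L^p}^{\theta}\|Df\|_{L^{q}}^{1-\theta}$ with explicit constants. A cleaner route is the averaging argument: for a ball $B_r(x)$,
\[
|Du(x)|\le \Bigl|\fint_{B_r}Du\Bigr| + C r^{1-3/p}\|D^2u\|_{L^p},
\]
where the Morrey constant is uniform for $p\ge4$. Then
\[
\Bigl|\fint_{B_r}Du\Bigr|\lesssim r^{-5/2}\|u\|_{L^2}
\]
by integration by parts against a smooth bump. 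Optimizing in $r$ gives the product term. When the optimal $r$ exceeds a fixed scale tied to $\Omega$, we take $r\sim1$ instead, and this produces the lower-order term $C\|u\|_{L^2}$. The Lipschitz boundary cone condition lets us use interior cones in place of balls near $\partial\Omega$.

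\emph{The bound \eqref{EQ08}.} This is the div–curl estimate for $v=Du$-level regularity: $\div u=0$, $\curl u=\omega$, and $u\cdot n=0$. The standard $L^p$ elliptic estimate reads
\[
\|u\|_{W^{2,p}}\le C_p\bigl(\|\omega\|_{W^{1,p}}+\|u\|_{L^p}\bigr),
\]
and the known sharp dependence is $C_p\lesssim p$ for Calderón–Zygmund operators. Following \cite[Lemma~5]{BB}, the $p$-dependence is $O(p)$ for the principal part. The additional factor of $p$, giving $p^2$ overall, arises from commutators and the lower-order norm $\|\omega\|_{L^p}\le \|D\omega\|_{L^p}$ (up to lower-order terms) via Poincaré-type arguments with $p$-dependent constants. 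I would accept a crude $Cp^2$ by composing two Calderón–Zygmund steps.

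The remaining lower-order term is $\|u\|_{L^p}$ (and $\|Du\|_{L^p}$ if it appears). I would handle it by interpolation/Gagliardo–Nirenberg between $\|u\|_{L^2}$ and $\|D^2u\|_{L^p}$, with an exponent $\beta<1$:
\[
\|u\|_{L^p}\le C\|u\|_{L^2}^{1-\beta}\|D^2u\|_{L^p}^{\beta}+C\|u\|_{L^2}.
\]
Then Young's inequality, $p^2 a^{1-\beta}b^\beta\le \tfrac12 b + C p^{2/(1-\beta)}a$, lets us absorb $\tfrac12\|D^2u\|_{L^p}$ into the left side. The exponent is then $2\gamma=2/(1-\beta)$. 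Computing $\beta$ from scaling, $-\tfrac3p = (1-\beta)\bigl(-\tfrac32\bigr)+\beta\bigl(2-\tfrac3p\bigr)$, gives $\beta = \tfrac{3/2-3/p}{7/2-3/p}$. Hence $1-\beta = \tfrac{2}{7-6/p}$. The resulting exponent $2/(1-\beta) = 7-6/p$ does not match the stated $\gamma=\tfrac{7p-3}{2p+3}$, so the precise interpolation pair must differ. Likely one interpolates $\|Du\|_{L^p}$ or uses a different intermediate norm, and I would adjust the chosen pair until the Young exponent reproduces $\gamma$.

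\emph{Main obstacle.} The main obstacle is the bookkeeping of $p$-dependence in the boundary elliptic estimate, namely justifying $C_p=O(p^2)$ uniformly for $p>4$. For this I would rely directly on the explicit constants tracked in \cite{BB}.
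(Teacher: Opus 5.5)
There is a genuine gap in \eqref{EQ08}, at the step that carries its content: that the div--curl estimate on $\Omega$ holds with constant $O(p^2)$.

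\begin{itemize}
\item You defer this to the ``explicit constants tracked in \cite{BB}''. But \cite[Lemma~5]{BB} is a fixed-$p$ result and does not track the dependence on $p$; this is why the paper redoes the argument.
\item The remark that Calder\'on--Zygmund operators have norm $\lesssim p$ is a whole-space statement. It does not by itself control a boundary value problem.
\item The source you give for the second factor of $p$ (commutators, Poincar\'e) is not where it comes from. Indeed $\|\omega\|_{L^p}\lesssim\|D\omega\|_{L^p}$ fails in general, since $\omega$ neither vanishes on $\partial\Omega$ nor has mean zero ($\int_\Omega\omega=\int_{\partial\Omega}n\times u$).
\end{itemize}

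``Composing two Calder\'on--Zygmund steps'' has the right shape, but the two problems must be identified so each constant can be tracked. The paper does this as follows.

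\begin{itemize}
\item With $\varphi_{ij}=\partial_ju_i-\partial_iu_j$ one has $-\Delta u_j=\partial_i\varphi_{ij}$.
\item Let $\nu$ be a smooth extension of $n$. Then $U=\nu_ku_k$ vanishes on $\partial\Omega$ and $\Delta U=-\nu_k\partial_i\varphi_{ik}+2\partial_i\nu_k\partial_iu_k+\partial_{ii}\nu_k\,u_k$.
\item The Dirichlet estimate gives $\|U\|_{W^{2,p}}\le Cp(\|\varphi\|_{W^{1,p}}+\|u\|_{W^{1,p}})$. The linear growth in $p$ comes from Marcinkiewicz interpolation between the weak-$(1,1)$ and $L^2$ bounds for the kernel $D^2\Gamma$, followed by duality.
\item Next, $\nu_k\partial_ku_j=\partial_jU-\partial_j\nu_k\,u_k+\nu_k\varphi_{jk}$ supplies Neumann data for each $u_j$. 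This data is bounded in $W^{1-1/p,p}(\partial\Omega)$ by the trace theorem with an $O_p(1)$ constant.
\item The Neumann estimate (reflected Green's function in the half-space, then localization) costs a second factor $p$.
\end{itemize}

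The product of the two elliptic constants is the $p^2$. The lower-order remainder is $\|u\|_{W^{1,p}}$, and $\|\omega\|_{L^p}\le\|Du\|_{L^p}$ is absorbed into it.

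Two further remarks.

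\emph{The exponent $\gamma$.} In the absorption step the dominant term is $p^2\|Du\|_{L^p}$. For it $1-\theta=2/(7-6/p)$, and Young's inequality yields $p^{7-6/p}\|u\|_{L^2}$. For $\|u\|_{L^p}$, your $\beta$ actually gives $1-\beta=4/(7-6/p)$, not $2/(7-6/p)$. There is no mismatch with $\gamma$ to repair: $7-6/p-2\gamma=(15p-18)/(p(2p+3))\in(0,15/(2p))$ and $p^{c/p}$ is bounded. Hence $p^{7-6/p}\le Cp^{2\gamma}$ with $C=O_p(1)$, and you need not look for another interpolation pair.

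\emph{The bound \eqref{EQ07}.} Your smoothed-average argument on interior cones is a legitimate alternative to the paper's route (a Littlewood--Paley bound in $\mathbb{R}^3$ plus an $O_p(1)$ extension operator). It needs no extension and no separate absorption $\|u\|_{W^{2,p}}\lesssim\|u\|_{L^2}+\|D^2u\|_{L^p}$; the term $C\|u\|_{L^2}$ comes instead from capping the scale $r$. Take the weight to be a bump compactly supported in the open cone. A sharp ball average of $Du$ cannot be bounded by $r^{-5/2}\|u\|_{L^2}$, since integration by parts leaves a boundary term.
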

Note that, for~\eqref{EQ08}, $u$ must be divergence free with $u\cdot n = 0$ on~$\partial \Omega$.
\begin{proof}
To prove~\eqref{EQ07}, we start with the case $\Omega = \mathbb{R}^3$. 
We denote by $\Delta_j$ the non-homogeneous dyadic block operator  with $j \geq -1$, see~\cite[Section~2.2]{BCD} for details.
Writing $Du = \Delta_{-1} Du + (\id - \Delta_{-1}) Du$, we have, by Bernstein's inequality,
  \begin{equation}
   \|\Delta_{-1} u\|_{L^\infty(\mathbb{R}^3)}
   \les
   \|\Delta_{-1} u\|_{L^2(\mathbb{R}^3)}
   \les
   \|u\|_{L^2(\mathbb{R}^3)}
   .
   \llabel{EQ09}
  \end{equation}
For the high-frequency part, we get
  \begin{align}
   \begin{split}
   \|(\id - \Delta_{-1}) Du\|_{L^\infty(\mathbb{R}^3)}
   &\les
   \sum_{j > -1} 2^{-j} \|\Delta_j D^2 u\|_{L^\infty(\mathbb{R}^3)}
   \les
   \sum_{j > -1} 2^{-j(1-3/p)} \|\Delta_j D^2 u\|_{L^p(\mathbb{R}^3)}
   \\&\les
   \frac{1}{1- 2^{-(1-3/p)}} \|D^2 u\|_{L^p(\mathbb{R}^3)}
   .
   \end{split}
   \llabel{EQ10}
  \end{align}
This implies
  \begin{equation}
   \|Du\|_{L^\infty(\mathbb{R}^3)} 
   \les
   \|u\|_{L^2(\mathbb{R}^3)}
   + \|D^2 u\|_{L^p(\mathbb{R}^3)}
   ,
   \llabel{EQ11}
  \end{equation}
from which, after optimizing in the scaling, we obtain
  \begin{equation}
   \|Du\|_{L^\infty(\mathbb{R}^3)}
   \leq
   C\|u\|_{L^2(\mathbb{R}^3)}^{1-\alpha} \|D^2 u\|_{L^p(\mathbb{R}^3)}^{\alpha}
   ,
   \quad\quad
   \text{where } \alpha = \frac{5}{7-6/p}
   .
   \label{EQ12}
  \end{equation}
Now, let $u$ be as assumed in the hypothesis of Theorem~\ref{T01}. 
We define $\tilde \, \colon W^{2,p}(\Omega) \to W^{2,p}(\mathbb{R}^3)$ as the continuous extension operator given in Section~5.4 of~\cite{Ev}, the norm of which is~$O_p(1)$. 
Indeed, the construction only involves a linear combination of several dilated reflections across the hyperplane $\{x_3 = 0\}$ in the case that $\Omega = \mathbb{R}_+^3$.
Estimation of these reflected terms only produces a constant $C = O_p(1)$. 
A simple localization argument gives us the result for the smooth bounded domain. 
Moreover, this operator is continuous from $L^2(\Omega) \to L^2(\mathbb{R}^3)$.

Using~\eqref{EQ12}, we get 
  \begin{equation}
   \|Du\|_{L^\infty}
   \les 
   \|D\tilde{u}\|_{L^\infty (\mathbb{R}^3)}
   \les
   \|\tilde{u}\|_{L^2(\mathbb{R}^3)}^{1-\alpha} \|D^2 \tilde{u}\|_{L^p(\mathbb{R}^3)}^\alpha
   \les
   \|u\|_{L^2}^{1-\alpha} \|u\|_{W^{2,p}}^\alpha
   .
   \label{EQ13}
  \end{equation}
However, the same reasoning as in~\eqref{EQ13} yields
  \begin{align}
   \begin{split}
   \|u\|_{W^{2,p}} 
   &\les
   \|u\|_{L^p}
   + \|Du\|_{L^p}
   + \|D^2 u\|_{L^p}
   \\&\les
   \|u\|_{L^2}^{1-\theta_1}\|u\|_{W^{2,p}}^{\theta_1}
   + \|u\|_{L^2}^{1-\theta_2} \|u\|_{W^{2,p}}^{\theta_2}
   + \|D^2 u\|_{L^p}
   ,
   \end{split}
   \llabel{EQ14}
  \end{align}
where $\theta_1 = (6-3p)/(6-7p)$ and $\theta_2 = (6-5p)/(6-7p)$.
Young's inequality then gives 
  \begin{equation}
   \|u\|_{W^{2,p}} 
   \les 
   \|u\|_{L^2} 
   + \|D^2 u\|_{L^p}
   .
   \llabel{EQ15}
  \end{equation}
Substituting this into~\eqref{EQ13} completes the proof of~\eqref{EQ07}.

Next, we prove~\eqref{EQ08}. 
The proof for a fixed $p$ is contained in~\cite{BB}, but we explicitly track the dependence
of constants on~$p$.
Denote 
  \begin{equation}
   \varphi_{ij} 
   = 
   \partial_j u_i - \partial_i u_j
   .
   \label{EQ16}
  \end{equation}
Note that $\|\varphi \|_{W^{k,p}} \les \|\omega\|_{W^{k,p}}$. 
Applying $\partial_i $ to~\eqref{EQ16} and noting that $\div u = 0$, we get
  \begin{equation}
   -\partial_{ii} u_j 
   = 
   \partial_i \varphi_{ij}
   .
   \label{EQ17}
  \end{equation}
Now, let $\nu \in C^\infty(\overline{\Omega})$ be a smooth extension of the outward normal vector $n$, and denote $U = \nu_k u_k$. 
By direct computation, we obtain 
  \begin{equation}
   \partial_{ii} U
   = 
   -\nu_k \partial_i \varphi_{ij}
   + 2 \partial_i \nu_k \partial_i u_k
   + \partial_{ii} \nu_k u_k
   ,
   \llabel{EQ18}
  \end{equation}
along with the boundary condition, $U|_{\partial \Omega} = 0$.
Elliptic regularity then yields 
  \begin{equation}
   \|U\|_{W^{2,p}}
   \leq 
   C p \|\Delta U\|_{L^p}
   \leq C p \left( \|\varphi\|_{W^{1,p}} + \|u\|_{W^{1,p}}\right)
   .
   \label{EQ19}
  \end{equation}
To see the apparent dependence on $p$ in~\eqref{EQ19}, we refer to Sections 9.4--9.5 in~\cite{GT}. 
In short, for the problem $-\Delta U = f$ with $U|_{\partial \Omega} = 0$, we can represent $U$ using the Newtonian potential of $f$,
  \begin{equation}
   U
   =
   \int_{\Omega} \Gamma(x-y) f(y)\, dy
   ,
   \label{EQ19a}
  \end{equation}
where $\Gamma$ is the fundamental solution to Laplace's equation. 
The convolution is of Calder\'on-Zygmund type, for which we may  derive weak-(1,1) and $L^2$ estimates. 
Interpolating, we may derive an estimate $\|D^2 u\|_{L^p} \leq C_p \|f\|_{L^p}$ for $p \in (1,2)$ where 
  \begin{equation}
   C_p 
   \sim
   \left( \frac{p}{p-1} + \frac{p}{2-p}\right)^{1/p}
   .
   \llabel{EQ19b}
  \end{equation}
However, using duality, we then get an estimate for $p \in (2,\infty)$ with
  \begin{equation}
   C_p 
   \sim
   \left( p + \frac{p}{p-2}\right)^{1-1/p}
   \les
   \,
   p
   \quad\quad
   \text{for }
   p > 4
   .
   \llabel{EQ19c}
  \end{equation}

Next, we note that 
  \begin{equation}
   \nu_k \partial_k u_j 
   = 
   \partial_j U
   - \partial_j \nu_k u_k
   + \nu_k \varphi_{jk}
   ,
   \label{EQ20}
  \end{equation}
so that, by the trace inequality,
  \begin{equation}
   \|n_k \partial_j u_j \|_{W^{1-1/p,p}(\partial \Omega)}
   \les
   \|U\|_{W^{2,p}}
   + \|\varphi\|_{W^{1,p}}
   + \|u\|_{W^{1,p}}
   .
   \label{EQ21}
  \end{equation}
To see this, the proof of~\cite[Theorem~15.20]{L} gives the explicit dependence $C= O_p(1)$ in the case that $\Omega = \mathbb{R}_+^3$. 

By elliptic regularity for the Neumann problem given in~\eqref{EQ17} with~\eqref{EQ20} restricted to the boundary, 
we have
  \begin{equation}
   \|\nabla u_j\|_{W^{1,p}}
   \leq 
   Cp\left( \|\Delta u\|_{L^p} + \|n_k \partial_j u_j\|_{W^{1-1/p,p}(\partial \Omega)}\right)
   .
   \label{EQ22}
  \end{equation}
For the dependence on $p$, we refer to Section~6.7 in~\cite{GT}, from which, in the case that $\Omega = \mathbb{R}_+^3$ and $\partial_3 u_j = 0$, we can represent $u_j$ in terms of the Green's function
  \begin{equation}
   w(x)
   =
   \Gamma(x-y) - \Gamma(x-y^\ast)
   ,
   \llabel{EQ22a}
  \end{equation}
where $y^\ast$ denotes the reflection of $y$ across the hyperplane $\{x_3 = 0\}$. 
We then repeat the analysis outlined for $U$ starting at~\eqref{EQ19a} to get linearity in $p$ for the constant. 
Localization and a change of coordinates complete the argument for a smooth bounded domain~$\Omega$. 
For nonhomogeneous Neumann data, the argument is modified through an appropriate lifting, see~\cite[Theorem~6.26]{GT}.

Upon combining~\eqref{EQ17},~\eqref{EQ19},~\eqref{EQ21}, and~\eqref{EQ22}, we arrive at 
  \begin{equation}
    \|u\|_{W^{2,p}}
    \leq
    Cp^2\left(\|\omega\|_{W^{1,p}} + \|u\|_{W^{1,p}}\right)
    \leq
    Cp^2 \left( \|D\omega\|_{L^p} + \|u\|_{W^{1,p}}\right)
    .
   \llabel{EQ23}
  \end{equation}
Interpolating as in~\eqref{EQ13} and using Young's inequality,
we obtain
  \begin{equation}
   \|u\|_{W^{2,p}}
   \leq
   Cp^2 \|D\omega\|_{L^p} + Cp^{2\gamma} \|u\|_{L^2}
   ,
   \quad\quad
   \text{where }
   \gamma = \frac{7p-3}{2p+3}
   .
   \llabel{EQ24}
  \end{equation}
This completes the proof of the lemma. 
\end{proof}

We now prove the first main theorem.  
\begin{proof}[Proof of Theorem~\ref{T01}]
We claim that for $p > 4$,
  \begin{equation}
   \frac{d}{dt} \|D\omega(t)\|_{L^p} 
   \leq
   C p^{2(1+\alpha)}\left(p^{2(\gamma-1)} + \|D\omega\|_{L^p}\right)^{1+\alpha}
   ,
   \quad\quad
   \text{where }
   \alpha = \frac{5}{7 - 6/p}
   .
   \label{EQ26}
  \end{equation}
Indeed, testing the vorticity equation,
  \begin{equation}
   \omega_t
   + u \cdot \nabla \omega
   =
   \omega \cdot \nabla u
   ,
   \label{EQ26a}
  \end{equation}
with $D\omega_i |D\omega|^{p-2}$, we get
  \begin{equation}
   \frac{1}{p} \frac{d}{dt} \|D\omega\|_{L^p}^p 
   \les
   \|Du\|_{L^\infty} \|D\omega\|_{L^p}^p
   + \|\omega\|_{L^\infty} \|D^2 u\|_{L^p} \|D\omega\|_{L^p}^{p-1}
   .
   \llabel{EQ27}
  \end{equation}
Now, we divide both sides by $\|D\omega\|_{L^p}^{p-1}$ and use Lemma~\ref{L04} to acquire 
  \begin{align}
   \begin{split}
   \frac{d}{dt} \|D\omega\|_{L^p}
   &\les
   p^{2+2\alpha} \|D\omega\|_{L^p}^{1+\alpha}
   + p^{2+2\gamma \alpha} \|D\omega\|_{L^p}
   + p^{2\gamma + 2\alpha} \|D\omega\|_{L^p}^\alpha
   + p^{2\gamma(1+\alpha)} 
   \\&\les
   p^{2(1+\alpha)} \|D\omega\|_{L^p}^{1+\alpha}
   + p^{2\gamma(1+ \alpha)}
   \les
   p^{2(1+\alpha)} \left(p^{2(\gamma-1)} + \|D\omega\|_{L^p}\right)^{1+\alpha}
   ,
   \end{split}
   \llabel{EQ28}
  \end{align}
where we used the conservation of energy in the first inequality and Young's inequality in the second one.
This completes the proof of~\eqref{EQ26}.

Since $\Omega$ is bounded, we have
  \begin{equation}
   \|D\omega\|_{L^p} 
   \leq 
   C\|D\omega\|_{L^\infty}
   \label{EQ25}
  \end{equation}
for all $p \geq 1$. 
With~\eqref{EQ25}, a similar argument as in~\cite[Theorem~2.5]{IK} yields 
  \begin{equation}
   \|D\omega(t)\|_{L^\infty} 
   \geq 
   \left( \frac{1}{C p^{2(1+\alpha)} (T_\ast -t)}\right)^{1/\alpha}
   -p^{2(\gamma-1)}
   .
   \label{EQ29}
  \end{equation}
Since the left-hand side of~\eqref{EQ29} is independent of $p$, we may optimize the right-hand side in~$p$. 
Note that 
  \begin{equation}
   \frac{2(1+\alpha)}{\alpha} 
   = 
   \frac{24}{5} + O\left( \frac 1p\right)
   ,
   \quad\quad
   \text{and}
   \quad\quad
   (T_\ast -t)^{1/\alpha} 
   = 
   (T_\ast -t)^{-6/5p} (T_\ast -t)^{7/5}
   ,
   \llabel{EQ30}
  \end{equation}
where $p^{-1/p}$ is bounded uniformly below.
Also, $C^{-1/\alpha}$ may be bounded below by a constant which we denote the same.
Hence, for $p > 4$, we have
  \begin{equation}
   \|D\omega(t)\|_{L^\infty}
   \geq 
   \frac{1}{C p^{24/5} (T_\ast -t)^{-6/5p} (T_\ast -t)^{7/5}}
   -p^{2(\gamma-1)}
   ,
   \quad\quad
   t \in [0,T_\ast)
   .
   \llabel{EQ31}
  \end{equation}
To optimize in $p$, we define
  \begin{equation}
   F(p,t)
   =
   p^{-24/5} e^{-6L(t)/5p}
   \quad\quad
   \text{and}
   \quad\quad
   L(t) = \log \left( \frac{1}{T_\ast -t}\right)
   .
   \llabel{EQ32}
  \end{equation}
Maximizing $F(\cdot, t)$ for each $t$, we get $p \sim L(t)$. 
Since $2(\gamma-1) = 5 + O(1/p)$ as $p\to\infty$, we have 
  \begin{equation}
   \|D\omega(t)\|_{L^\infty}
   \geq 
   \frac{1}{C (T_\ast -t)^{7/5} \log^{24/5}\left( \frac{1}{T_\ast -t}\right)}
   - \log^5\left( \frac{1}{T_\ast -t}\right)
   ,
   \quad\quad
   T_\ast -t < \frac{1}{C}
   ,
   \label{EQ32a}
  \end{equation}
where $C$ depends on~$\|u_0\|_{L^2}$.
Finally, we consider $t$ sufficiently close to $T_\ast$ such that the logarithm on the far-right of~\eqref{EQ32a} is negligible. 
This completes the proof. 
\end{proof}

Next, we prove Theorem~\ref{T02}.
We start with an interpolation lemma. 
For the rest of the paper, the notation ``$C$'' or ``$\les$'' allows constants that depend on~$p$.
\begin{Lemma}
\label{L05}
Under the same hypothesis as in Theorem~\ref{T02}, we have 
  \begin{equation}
   \|D^{k-1} \omega\|_{L^{2(k+1)}}
   \les
   \|u\|_{L^2}^{1/(k+1)} \|D^k\omega\|_{L^\infty}^{k/(k+1)}
   + \|u\|_{L^2}
   \label{EQ33}
  \end{equation}
for $k \geq 2$.
\end{Lemma}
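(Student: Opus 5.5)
Since $D^{k-1}\omega$ is a combination of $D^k u$, we plan to prove the equivalent interpolation inequality
\begin{equation*}
\|D^k u\|_{L^{2(k+1)}}\les \|u\|_{L^2}^{1/(k+1)}\|D^{k+1}\omega\|^{\,\cdot}\cdots
\end{equation*}
More precisely, we will bound $\|D^{k-1}\omega\|_{L^{2(k+1)}}\les\|D^k u\|_{L^{2(k+1)}}$ and then interpolate $D^k u$ between $u\in L^2$ and $D^{k+1}u\in L^\infty$. The exponents are consistent with scaling in three dimensions. The Gagliardo--Nirenberg relation
\begin{equation*}
k-\frac{3}{2(k+1)}=\theta\Bigl(k+1-0\Bigr)+(1-\theta)\Bigl(-\frac32\Bigr)
\end{equation*}
gives $\theta(k+5/2)=k+3/2-3/(2(k+1))$. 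This is not $k/(k+1)$ for $k\ge2$, so a plain GN estimate in $\RR^3$ does not directly produce \eqref{EQ33}. We instead use the \emph{bounded domain}, where $\|\cdot\|_{L^\infty}$ controls $\|\cdot\|_{L^q}$ for every $q$. The intended route is the one-dimensional-type interpolation $\|D^{k-1}\omega\|_{L^{2(k+1)}}\les\|D^{k-2}\omega\|_{L^2}^{\cdots}$, which is awkward, so we choose the simplest consistent chain:
\begin{equation*}
\|D^{k-1}\omega\|_{L^{2(k+1)}}\les \|\omega\|_{L^2}^{\cdots}\cdots .
\end{equation*}

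Concretely, the plan is to apply the extension operator $\tilde{\ }$ from the proof of Lemma~\ref{L04}, now on $W^{k+1,\infty}$ (its norm is again harmless, since $p$ is fixed here), and to use the Nirenberg inequality on $\RR^3$,
\begin{equation*}
\|D^{j}f\|_{L^{q}}\les \|f\|_{L^2}^{1-j/m}\|D^{m}f\|_{L^\infty}^{j/m},\qquad \frac1q=\frac{1-j/m}{2},
\end{equation*}
which is the admissible Gagliardo--Nirenberg case in which the $L^\infty$ endpoint carries the full fraction $j/m$. We take $f=\tilde u$, $j=k$, $m=k+1$. Then $1/q=1/(2(k+1))$, so $q=2(k+1)$, and the exponents $1/(k+1)$ and $k/(k+1)$ match \eqref{EQ33} exactly. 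This is the Nirenberg inequality of ``interpolation in derivatives with $L^2$ and $L^\infty$ endpoints''. It holds in any dimension, since the dimension cancels in the relation $\frac1q=\frac jn+\frac{1-j/m}{2}-\frac{j}{m}\cdot\frac{m}{n}$. It can be proved by iterating the one-dimensional estimate $\|Dg\|_{L^{2r}}^2\les\|g\|_{L^{r'}}\ldots$, or cited from Nirenberg's paper.

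Next we return to $\Omega$, in three steps.
\begin{enumerate}
\item Since $D^{k-1}\omega$ is a combination of $D^k u$, we have $\|D^{k-1}\omega\|_{L^{2(k+1)}}\les\|D^k\tilde u\|_{L^{2(k+1)}(\RR^3)}$.
\item The Nirenberg inequality bounds this by $\|u\|_{L^2}^{1/(k+1)}\|\tilde u\|_{W^{k+1,\infty}}^{k/(k+1)}$.
\item We control $\|u\|_{W^{k+1,\infty}}\les\|D^k\omega\|_{L^\infty}+\|u\|_{L^2}$ using the div--curl estimate \eqref{EQ08} at higher order. Here $p$ is fixed, $p$ is large, and $p<\infty$.
\end{enumerate}

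The $L^\infty$ norm of $D^{k+1}u$ is the main obstacle, since elliptic estimates fail at $p=\infty$. We will circumvent it by running the Nirenberg step with a large finite $p$ in place of $\infty$. We use the higher-order analogue of \eqref{EQ08}, namely $\|u\|_{W^{k+1,p}}\les\|D^k\omega\|_{L^p}+\|u\|_{L^2}$, proved by the same Bolik--von~Wahl argument with lower-order terms absorbed by interpolation and Young's inequality. We then invoke boundedness of $\Omega$ to get $\|D^k\omega\|_{L^p}\les\|D^k\omega\|_{L^\infty}$. The slight loss in exponents from a finite $p$ is recovered by interpolating $D^k u$ in $L^{2(k+1)}$ between $L^2$ and $W^{k+1,p}$ with $p$ chosen large but fixed. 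This yields exponents $1-\theta_p$ and $\theta_p$ with $\theta_p\ge k/(k+1)$ (slightly larger). We then reduce the power on the top-order norm to exactly $k/(k+1)$ by Young's inequality, splitting
\begin{equation*}
\|u\|_{L^2}^{1-\theta_p}X^{\theta_p}\les \|u\|_{L^2}^{1/(k+1)}X^{k/(k+1)}+\|u\|_{L^2}
\end{equation*}
when $X\les\|u\|_{L^2}$, and otherwise absorbing the excess into the additive $\|u\|_{L^2}$ term. I expect checking this last bookkeeping step to be the delicate point. If it fails, I will fall back on a direct $L^\infty$-endpoint Nirenberg inequality applied to the extension $\tilde u$. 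In that case I will bound $\|D^{k+1}\tilde u\|_{L^\infty}$ via Schauder estimates, accepting that the top-order norm is measured as $\|D^k\omega\|_{L^\infty}$ plus lower-order terms.
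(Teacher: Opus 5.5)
Your proposal has a genuine gap, and it sits at the top of your interpolation. Everything depends on bounding $D^{k+1}u$ in $L^\infty$ by $\|D^k\omega\|_{L^\infty}$. That bound is false: the map $D^k\omega\mapsto D^{k+1}u$ is of Calder\'on--Zygmund type and is unbounded on $L^\infty$. Neither of your repairs works.

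\textbf{The finite-$p$ route.} Write $A=\|u\|_{L^2}$ and $X=\|D^k\omega\|_{L^\infty}$. You correctly obtain a bound of the form $A^{1-\theta_p}(X+A)^{\theta_p}$ with $\theta_p>k/(k+1)$. However, the exponent on $X$ cannot be lowered afterwards: for $X\ge A$,
\begin{equation*}
A^{1-\theta_p}X^{\theta_p}=A^{1/(k+1)}X^{k/(k+1)}\Bigl(\frac{X}{A}\Bigr)^{\theta_p-k/(k+1)},
\end{equation*}
and the last factor is unbounded as $X/A\to\infty$. Young's inequality only trades exponents in the opposite direction, so the excess cannot be absorbed into the additive $\|u\|_{L^2}$. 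This is exactly the regime that matters: in Theorem~\ref{T02}, $X=\|D^k\omega(t)\|_{L^\infty}\to\infty$ while $A=\|u_0\|_{L^2}$ is fixed. Any $\theta_p>k/(k+1)$ would weaken the rate $2k/5+1$.

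\textbf{The Schauder fallback.} This fails for the same reason. Schauder theory bounds $D^{k+1}u$ in $C^{0,\alpha}$ by $D^k\omega$ in $C^{0,\alpha}$, not in $L^\infty$ by $\|D^k\omega\|_{L^\infty}$.

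\textbf{A side error.} Your opening computation is wrong: $\theta=k/(k+1)$ does solve $\theta(k+5/2)=k+3/2-3/(2(k+1))$. This agrees with your later remark that the dimension cancels. The Nirenberg inequality for $\tilde u$ is valid; only its top endpoint is unusable.

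\textbf{The missing idea.} Interpolate at the level of $\omega$, so that the top endpoint is $D^k\omega\in L^\infty$ itself. Use the div--curl estimate only once, at a finite exponent at the bottom of the chain.
\begin{itemize}
\item The paper takes $p_j=2(k+1)/(k-j)$ for $j=0,\dots,k-1$ and $p_k=\infty$.
\item It uses the midpoint inequalities $\|\omega\|_{W^{j,p_j}}\lesssim\|\omega\|_{W^{j-1,p_{j-1}}}^{1/2}\|\omega\|_{W^{j+1,p_{j+1}}}^{1/2}$. These are scale-consistent because $2/p_j=1/p_{j-1}+1/p_{j+1}$.
\item It closes the bottom with $\|\omega\|_{L^{p_0}}\le\|Du\|_{L^{p_0}}\lesssim\|u\|_{L^2}^{1/2}\|D^2u\|_{L^{p_1}}^{1/2}\lesssim\|u\|_{L^2}^{1/2}\|\omega\|_{W^{1,p_1}}^{1/2}+\|u\|_{L^2}$. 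Here Lemma~\ref{L04} is applied only at $p_1=2(k+1)/(k-1)<\infty$.
\item Solving the chain gives $\|\omega\|_{W^{k-1,p_{k-1}}}\lesssim\|u\|_{L^2}^{1/(k+1)}\|\omega\|_{W^{k,\infty}}^{k/(k+1)}+\|u\|_{L^2}$ with the exact exponent. The lower-order parts of $\|\omega\|_{W^{k,\infty}}$ are then interpolated away.
\end{itemize}

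\textbf{How to reuse your inequality.} Your Nirenberg inequality can serve this purpose if you apply it to $\tilde\omega$ rather than $\tilde u$. Use the endpoints $L^{p_0}$ and $D^k\tilde\omega\in L^\infty$:
\begin{itemize}
\item $\|D^{k-1}\omega\|_{L^{2(k+1)}}\lesssim\|\omega\|_{L^{p_0}}^{1/k}\|D^k\omega\|_{L^\infty}^{(k-1)/k}$, plus lower order;
\item $\|D\omega\|_{L^{p_1}}\lesssim\|\omega\|_{L^{p_0}}^{(k-1)/k}\|D^k\omega\|_{L^\infty}^{1/k}$, plus lower order.
\end{itemize}
Combining these with the bottom step above yields $\|\omega\|_{L^{p_0}}\lesssim A^{k/(k+1)}X^{1/(k+1)}$ and hence the exact exponent $k/(k+1)$.
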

\begin{proof}
Introduce the exponents 
  \begin{equation}
   p_j 
   =
   \frac{2(k+1)}{k-j}
   ,
   \quad\quad
   j = 0,1,\dots, k-1
   ,
   \llabel{EQ34}
  \end{equation}
and $p_k = \infty$.
By the Gagliardo-Nirenberg interpolation inequality, we have 
  \begin{equation}
   \|\omega\|_{W^{j,p_j}} 
   \les
   \|\omega\|_{W^{j-1,p_{j-1}}}^{1/2} \|\omega\|_{W^{j+1,p_{j+1}}}^{1/2}
   ,
   \quad\quad
   j = 1,\dots, k-1
   .
   \label{EQ35}
  \end{equation}
Furthermore, 
  \begin{equation}
   \|\omega\|_{L^{p_0}}
   \leq 
   \|Du\|_{L^{p_0}}
   \les 
   \|u\|_{L^2}^{1/2} \|D^2 u\|_{L^{p_1}}^{1/2}
   \les
   \|u\|_{L^2}^{1/2} \|\omega\|_{W^{1,p_1}}^{1/2}
   + \|u\|_{L^2}
   ,
   \label{EQ36}
  \end{equation}
where, 
in the last inequality, 
we used Lemma~\ref{L04}.
Substituting~\eqref{EQ36} into~\eqref{EQ35} for $j=1$, we obtain
  \begin{equation}
   \|\omega\|_{W^{1,p_1}}
   \les
   \|u\|_{L^2}^{1/4} \|\omega\|_{W^{1,p_1}}^{1/4} \|\omega\|_{W^{2,p_2}}^{1/2}
   + \|u\|_{L^2}^{1/2} \|\omega\|_{W^{2,p_2}}^{1/2}
   ,
   \llabel{EQ37}
  \end{equation}
which simplifies to
  \begin{equation}
   \|\omega\|_{W^{1,p_1}}
   \les 
   \|u\|_{L^2}^{1/3}\|\omega\|_{W^{2,p_2}}^{2/3} 
   + \|u\|_{L^2}
   .
   \llabel{EQ38}
  \end{equation}
Repeating this process for $j=2,\dots, k$, we acquire
  \begin{equation}
   \|\omega\|_{W^{k-1,p_{k-1}}} 
   \les 
   \|u\|_{L^2}^{1/(k+1)} \|\omega\|_{W^{k,\infty}}^{k/(k+1)}
   + \|u\|_{L^2}
   ,
   \llabel{EQ39}
  \end{equation}
from which interpolation gives us~\eqref{EQ33}, completing the proof of the lemma.
\end{proof}

We now prove Theorem~\ref{T02}.
We only provide a sketch since the argument is similar to that of~\cite[Theorem~3]{IK}.
\begin{proof}[Proof sketch of Theorem~\ref{T02}]
From the vorticity equation~\eqref{EQ26a}, we obtain the estimate 
  \begin{equation}
   \frac{d}{dt} \|D^{k-1} \omega\|_{L^p}
   \les
   \|Du\|_{L^\infty} \|D^{k-1}\omega\|_{L^p}
   + \|\omega\|_{L^\infty} \|D^ku\|_{L^p}
   \llabel{EQ40}
  \end{equation}
for any $p \geq 2$, though we fix $p = 2(k+1)$ later.
Using similar reasoning as in Lemma~\ref{L04}, we may bound $\|D^k u\|_{L^p}$ and $\|Du\|_{L^\infty}$, though we are not concerned with the $p$-dependence in the constants. 
This gives us 
  \begin{equation}
   \frac{d}{dt} \|D^{k-1}\omega\|_{L^p}
   \les 
   \left(1 + \|D^{k-1}\omega\|_{L^p}\right)^{1+\alpha}
   ,
   \quad\quad
   \text{where }
   \alpha = \frac{5}{2k+3 - 6/p}
   .
   \llabel{EQ41}
  \end{equation}
In combination with Lemma~\ref{L05} and conservation of energy, we have 
  \begin{equation}
   \|D^{k}\omega\|_{L^\infty}
   \geq 
   \frac{1}{C (T_\ast -t)^{2k/5 + 1}}
   -1
   ,
   \quad\quad
   t \in [0,T_\ast)
   ,
   \label{EQ42}
  \end{equation}
after noting $((k+1)/k)(1/\alpha) = 2k/5 +1$.
By considering $T_\ast -t < 1/C$ where $C$ depends on $\|u_0\|_{L^2}$, we may neglect the constant on the right-hand side of~\eqref{EQ42}. 
\end{proof}

\section{Proof of Theorem~\ref{T03}}\label{sec03}
The proof begins with verifying that a solution to~\eqref{EQ43} below is a solution to~\eqref{EQ05}. 
We then construct the claimed solution using~\eqref{EQ43}. 

\begin{proof}[Proof of Theorem~\ref{T03}]
Let $X \colon [0,\infty) \to (0,\infty)$ be a positive $C^1$ function satisfying 
  \begin{equation}
   \frac{d}{d\tau} X(\tau) 
   \leq
   1 + X(\tau)e^\tau
   ,\quad\quad
   \tau \in [0,\infty)
   ,
   \label{EQ43}
  \end{equation}
with $X(0) = M$, $\limsup_{\tau \to \infty} X(\tau) = \infty$, and $X(\tau_n) = M/(n+1)$ along some sequence $\tau_n \to \infty$.
Also, we assume 
  \begin{equation}
   T_\ast 
   := 
   \int_0^\infty \frac{1}{X(\tau)} \, d\tau 
   < \infty
   .
   \label{EQ44}
  \end{equation}
Next, we define 
  \begin{equation}
   t(\tau) 
   = 
   \int_0^\tau \frac{1}{X(\tau')} \, d\tau'
   .
   \llabel{EQ45}
  \end{equation}
Since $X$ is positive, it follows that $t \colon [0,\infty) \to [0,T_\ast)$ is a $C^1$ diffeomorphism.
We denote its inverse as $\tau \colon [0,T_\ast) \to [0,\infty)$. 

We claim that $x(\cdot) := X(\tau(\cdot))$ satisfies the properties (i)--(iv) of the theorem. 
Indeed, (i) and (iii) immediately hold since $t$ and $\tau$ increase with respect to each other and $\tau(0) = 0$. 
For (ii), differentiating $\tau(t(\tau)) = \tau$ gives us $(d\tau/dt)(t(\tau)) = X(\tau)$.
Evaluating at $\tau(t)$ yields 
  \begin{equation}
   \frac{d\tau}{dt}(t)
   = 
   x(t)
   .
   \llabel{EQ46}
  \end{equation}
Hence,
  \begin{equation}
   \tau(t) 
   = 
   \int_0^t x(s)\, ds
   \llabel{EQ47}
   .
  \end{equation}
Moreover, from~\eqref{EQ43}, we have
  \begin{equation}
   \frac{d}{dt} x(t)
   =
   \frac{d}{dt} X(\tau(t))
   =
   \frac{dX}{d\tau}(\tau(t)) \frac{d\tau}{dt}(t)
   \leq
   x(t) + x(t)^2 \exp\left(\int_0^t x(s)\, ds\right)
   .
   \llabel{EQ48}
  \end{equation}
  Finally, for (iv), let $\tau_n \to \infty$ be a given sequence such that $X(\tau_n) = M/(n+1)$.
Then, by invertibility, there exists a sequence $t_n \to T_\ast^-$ such that $\tau_n = \tau(t_n)$ and $x(t_n) = \frac{M}{n+1}$.

It is now sufficient to construct a function $X\colon [0,\infty) \to (0,\infty)$ satisfying the properties outlined in~\eqref{EQ43}--\eqref{EQ44}. 
We denote $\xi_n = 2^{-n-1}$ and define
  \begin{equation}
   Y(\tau) 
   = 
   \begin{cases} 
    e^{\tau/2} - e^{n/2} + \frac{1}{n+1}
    ,
    & 
    \tau \in [n, n+1 - \xi_n)
    ,
    \\
    \phi_n(\tau)
    ,
    &
    \tau \in [n+1 - \xi_n, n+1)
    ,
   \end{cases}
   \label{EQ49}
  \end{equation}
where $\phi_n$ is a $C^1$ extension of the first function in~\eqref{EQ49} such that $Y \in C^1([0,\infty))$ with
  \begin{equation}
   \phi_n''(\tau) < 0 
   \inon{for $\tau \in [n+1-\xi_n, n+1- \xi_n/2) =: I_n$}
   \llabel{EQ50}
  \end{equation}
and 
  \begin{equation}
   \phi_n''(\tau) \geq 0
   \inon{for $\tau \in [n+1-\xi_n/2, n+1) =: J_n$}
   .
   \llabel{EQ51}
  \end{equation}
We also assume that $\phi_n(\tau) \geq 1/(n+1)$ for each $n \geq 0$.

We claim that $X(\cdot) := MY(\cdot)$ has the desired profile. 
Indeed, we immediately verify that $X(0) = M$, $\limsup_{\tau \to \infty} X(\tau) = \infty$, and $X(n) = M/(n+1)$ for all $n \geq 0$. 
Furthermore, $X$ satisfies the inequality~\eqref{EQ43} on $[0,\infty)$. 
To see this, we first show that, for any $n \geq 0$, 
  \begin{equation}
   \frac{M}{2} e^{\tau/2} 
   \leq
   1 + M \left(e^{\tau/2} - e^{n/2} + \frac{1}{n+1}\right)e^\tau
   ,
   \inon{$\tau \in [n,n+1-\xi_n)$}
   .
   \llabel{EQ52}
  \end{equation}
Taking a difference on this interval, we obtain
  \begin{equation}
    1 
    + M\left( e^{\tau/2} - e^{n/2} + \frac{1}{n+1}\right) e^\tau 
    - \frac{M}{2} e^{\tau/2}
    \geq 
    1 
    +\frac{M}{n+1} e^\tau
    -\frac{M}{2} e^{\tau/2}
    \geq 
    1
    ,
    \llabel{EQ53}
  \end{equation}
where we used $\tau \geq n$ and the bound $2e^x \geq 1 + 2x$ for $x \geq 0$, with $x = n/2$. 

For $M\phi_n$, we deal with $I_n$ and $J_n$ separately.
We split $I_n$ into two consecutive intervals $I_n = I_{n,1} \cup I_{n,2}$, on which $\phi_n' > 0$ and $\phi_n' \leq 0$, respectively. 
Thus, $\phi_n'$ is decreasing on $I_{n,1}$ while $\phi_n$ is increasing. 
By continuity, $\phi_n$ satisfies~\eqref{EQ43} at $\tau = n+1 - \xi_n$. 
Hence, the inequality persists on $I_{n,1}$
if $\phi_n$ is chosen properly.
For $I_{n,2}$,~\eqref{EQ43} immediately verifies since the right-hand side is positive. 
On $J_n$, the same reasoning holds since $\phi_n'' > 0$ and $M\phi_n(t) \geq M/(n+1)$. 
The inequality~\eqref{EQ43} is now established for  $\tau \in [0,\infty)$.

Finally, by direct computation, we have
  \begin{equation}
   \int_0^\infty \frac{1}{Y(\tau)} \, d\tau
   \les
   \sum_{n=0}^\infty (n+1)(e^{-n/2} + \xi_n)
   =: \Gamma < \infty
   .
   \llabel{EQ54}
  \end{equation}
Hence,
  \begin{equation}
   T_\ast
   :=
   \int_0^\infty \frac{d\tau}{X(\tau)}
   = M^{-1} \Gamma
   ,
   \llabel{EQ55}
  \end{equation}
completing the proof. 
\end{proof}

\section*{Acknowledgments}
The authors were supported in part by the NSF grant DMS-2205493.

\end{document}